\documentclass[11pt, amsfonts]{amsart}


\usepackage{amsmath,amssymb,amsthm,eucal,color}
\usepackage[all]{xy}


\textwidth 6in
\oddsidemargin .25in
\evensidemargin .25in
\parskip .05in
\parindent .0pt

\numberwithin{equation}{section}

\SelectTips{eu}{12}


\newtheorem{theorem}{Theorem}[section]
\newtheorem{proposition}[theorem]{Proposition}
\newtheorem{lemma}[theorem]{Lemma}
\newtheorem{corollary}[theorem]{Corollary}

\theoremstyle{definition}
\newtheorem{definition}[theorem]{Definition}
\newtheorem{example}[theorem]{Example}

\newtheorem{question}[theorem]{Question}

\theoremstyle{remark}
\newtheorem{remark}[theorem]{Remark}


\newcommand{\Q}{\mathbb{Q}}

\newcommand{\F}{\mathcal{F}}
\newcommand{\cat}{\mathrm{cat}}

\newcommand{\TC}{\mathrm{TC}}
\newcommand{\MTC}{\mathrm{MTC}}
\newcommand{\zcl}{\mathrm{zcl}}
\newcommand{\cl}{\mathrm{cup}}


\title{On the growth of topological complexity}

\author{Daisuke Kishimoto}
\address{Department of Mathematics, Kyoto University, Kyoto, 606-8502, Japan}
\email{kishi@math.kyoto-u.ac.jp}

\author{Atsushi Yamaguchi}
\address{Department of Mathematical Sciences, Osaka Prefecture University, Osaka, 599-8531, Japan}
\email{yamaguti@las.osakafu-u.ac.jp}

\subjclass[2010]{}
\keywords{topological complexity, module topological complexity, LS-category}


\begin{document}

\baselineskip.525cm

\maketitle

\begin{abstract}
  Let $\TC_r(X)$ denote the $r$-th topological complexity of a space $X$. In many cases, the generating function $\sum_{r\ge 1}\TC_{r+1}(X)x^r$ is a rational function $\frac{P(x)}{(1-x)^2}$ where $P(x)$ is a polynomial with $P(1)=\cat(X)$, that is, the asymptotic growth of $\TC_r(X)$ with respect to $r$ is $\cat(X)$. In this paper, we introduce a lower bound $\MTC_r(X)$ of $\TC_r(X)$ for a rational space $X$, and estimate the growth of $\MTC_r(X)$.
\end{abstract}


\section{Introduction}

The topological complexity $\TC(X)$ was introduced by Farber \cite{F} for the motion planning problem \cite{L,S}, which measures discontinuity of the process of robot motion planning in the configuration space $X$. The higher topological complexity $\TC_r(X)$ was introduced by Rudyak \cite{R} as a next step towards capturing the complexity of tasks that can be given to robots besides the motion problem, so that $\TC_r(X)$ measures discontinuity of the process of robot motion planning of a series of places to visit, in a specific order.

We recall the precise definition of $\TC_r(X)$. For a space $X$, let $X^r$ denote the $r$-th Cartesian product of $X$, and let $\Delta_r\colon X\to X^r$ denote the diagonal map $\Delta_r(x)=(x,x,\ldots,x)$ for $x\in X$. The $r$-th topological complexity $\TC_r(X)$ of a space $X$ is defined to be the least integer $n$ such that there is an open cover $X^r=U_0\cup U_1\cup\cdots\cup U_n$ having the property that each $U_i$ has a homotopy section $s_i\colon U_i\to X$ of $\Delta_r$, that is, $\Delta_r\circ s_i$ is homotopic to the inclusion $U_i\to X^r$. Then $\TC_2(X)$ is the topological complexity of Farber \cite{F}, and $\TC_r(X)$ for $r>2$ is the higher topological complexity of Rudyak \cite{R}. It is known that $\TC_r(X)$ is a homotopy invariant of $X$.

For a space $X$, we can define a formal power series
$$\F_X(x)=\sum_{r=1}^\infty\TC_{r+1}(X)x^r.$$
In \cite{FO}, Farber and Oprea asked the following question. Let $\cat(X)$ denote the LS-category of a space $X$.

\begin{question}
  \label{question}
  For which finite CW-complex $X$ is $\F_X(x)$ a rational function
  $$\frac{P(x)}{(1-x)^2}$$
  such that $P(x)$ is a polynomial with $P(1)=\cat(X)$?
\end{question}

As is observed in \cite{FO} (and proved in \cite{FKS}), Question \ref{question} is asking whether or not
$$\TC_{r+1}(X)=\TC_r(X)+\cat(X)$$
for all $r$ large enough. Farber, Kishimoto and Stanley \cite{FKS} proved that if $\TC_r(X)=\zcl_r(X;\Bbbk)$ and $\cat(X)=\cl(X;\Bbbk)$ for some field $\Bbbk$ and all $r$ large enough, then $\F_X(x)$ satisfies the condition in Question \ref{question}, where $\zcl_r(X;\Bbbk)$ and $\cl(X;\Bbbk)$ denote the $r$-th zero-divisors cup-length and the cup-length of $X$ over $\Bbbk$, respectively. They also showed that $\F_X(x)$ does not always satisfy the condition in Question \ref{question}.

In \cite{JMP}, Jessup, Murillo and Parent defined the module topological complexity, which is a lower bound for the topological complexity of rational spaces. We consider its higher analog, $r$-th module topological complexity $\MTC_r(X)$ for a rational space $X$. The higher topological complexity $\TC_r(X)$ is, by definition, the sectional category of the diagonal map $\Delta_r\colon X\to X^r$. In \cite{C,C1}, Carrasquel-Vela studied the sectional category of maps between rational spaces, and defined the module sectional category. We specialize the module sectional category to define $\MTC_r(X)$. We will prove:

\begin{theorem}
  \label{MTC cup}
  Let $X$ be a simply-connected rational space of finite rational type. Then for $r\ge 2$,
  $$\cl(X;\Q)\le\MTC_{r+1}(X)-\MTC_r(X)\le 2\cat(X).$$
\end{theorem}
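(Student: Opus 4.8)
The plan is to establish the two inequalities separately, working with the rational (Sullivan) models that define $\MTC_r(X)$ as a module sectional category. The key object is the diagonal map $\Delta_r\colon X\to X^r$ and its associated algebraic model: if $(\Lambda V,d)$ is a minimal Sullivan model of $X$, then $X^r$ is modeled by the $r$-fold tensor product and the diagonal is modeled by the multiplication map $\mu_r\colon (\Lambda V)^{\otimes r}\to \Lambda V$. The module sectional category $\MTC_r(X)$ is computed from the kernel of $\mu_r$, i.e. the module of "higher zero-divisors," via the module structure over $(\Lambda V)^{\otimes r}$. My strategy is to relate the models for consecutive values of $r$ through the natural factorization that inserts an extra tensor factor.

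First I would prove the lower bound $\cl(X;\Q)\le \MTC_{r+1}(X)-\MTC_r(X)$. The idea is to exhibit a map (or module retraction) that relates the $(r+1)$-st model to the $r$-th one by collapsing one coordinate, which shows $\MTC_{r+1}(X)\ge \MTC_r(X)$, and then to sharpen this by detecting an extra $\cl(X;\Q)$ worth of sectional category. Concretely, one factors $\Delta_{r+1}$ as a composite involving $\Delta_r$ together with the diagonal $\Delta_2$ in the last two coordinates; since module sectional category is subadditive under such composites and since $\MTC$ of the $\Delta_2$-type contribution detects the cup-length (the module analog of the classical bound $\cl(X;\Q)\le \cat(X)\le \TC(X)$), one gains at least $\cl(X;\Q)$. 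The technical heart here is verifying that the module structure used in the definition of $\MTC$ behaves well under this coordinate splitting, so that a nonzero product of $\cl(X;\Q)$ zero-divisors in the new coordinate survives as an obstruction.

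For the upper bound $\MTC_{r+1}(X)-\MTC_r(X)\le 2\cat(X)$, I would use the fact that $\MTC_r(X)$ is a lower bound for $\TC_r(X)$ together with a subadditivity estimate internal to the module category. The cleanest route is to bound the increment directly: writing $X^{r+1}=X^r\times X$ and factoring the diagonal accordingly, module sectional category is subadditive, so the increment from $r$ to $r+1$ is controlled by the module sectional category of the map in the two new "directions," which in turn is bounded by $2\cat(X)$ via the standard inequality $\MTC(X)\le \TC(X)\le 2\cat(X)$ applied at the level of the module model. I would make this precise by producing an explicit filtration of the kernel of $\mu_{r+1}$ refining that of $\mu_r$ whose length grows by at most $2\cat(X)$.

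The main obstacle I anticipate is the lower bound, specifically establishing that the extra $\cl(X;\Q)$ genuinely appears rather than being absorbed. Unlike $\TC_r$, where one can freely use topological open covers and nilpotence of cohomology products, $\MTC_r$ is defined purely model-theoretically, so I must argue entirely with relative Sullivan models and the module-theoretic notion of sectional category; the danger is that a naive coordinate-splitting only yields monotonicity $\MTC_{r+1}\ge\MTC_r$ without the sharp $+\cl(X;\Q)$ gap. Resolving this will require carefully choosing which coordinate differences to multiply and checking that their product is a nonzero module generator in the appropriate homology, i.e. that the cup-length classes in the last factor are not killed when restricted along the module resolution. Once this survival statement is secured, both inequalities should follow from routine subadditivity and the already-available comparison $\MTC_r(X)\le\TC_r(X)$.
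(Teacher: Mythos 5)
Your plan for the lower bound has a genuine gap: ``subadditivity of module sectional category under composites'' produces inequalities of the form $\MTC_{r+1}(X)\le\MTC_r(X)+(\text{something})$, which is the wrong direction for proving $\MTC_{r+1}(X)\ge\MTC_r(X)+\cl(X;\Q)$. You correctly flag the danger that a naive coordinate splitting only yields monotonicity, but you do not supply the mechanism that closes it, and the device you propose --- that ``a nonzero product of $\cl(X;\Q)$ zero-divisors in the new coordinate survives as an obstruction'' --- is a cohomological obstruction argument: it would only reprove $\zcl_{r+1}(X;\Q)\ge\zcl_r(X;\Q)+\cl(X;\Q)$, which is strictly weaker than the claimed inequality for $\MTC$ (Example \ref{odd example} shows $\MTC_r$ can strictly exceed $\zcl_r$). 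The paper's mechanism is instead a cochain-level division argument: with $a=a_1\cdots a_n$ realizing $\cl(X;\Q)=n$, one forms the zero-divisors $\bar a_i=1^{\otimes r}\otimes a_i-1^{\otimes(r-1)}\otimes a_i\otimes 1$, their product $\bar a\in K_{r+1}^n$, and the differential graded $(\Lambda V)^{\otimes r}$-module map $f(x)=(x\otimes 1)\bar a$. The essential step is an \emph{explicit dg-module retraction} $q$ of $f$, built from a vector-space splitting $\Lambda V=\langle a\rangle\oplus W$ of cochain complexes; because $\bar a\in K_{r+1}^n$, the map $f$ descends to the quotients by powers of the kernels, and composing $q$ with a homotopy retraction of $p_{r+1,m+n}$ yields one for $p_{r,m}$, giving $\MTC_r(X)\le\MTC_{r+1}(X)-n$. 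Without this retraction (or an equivalent device) the extra $\cl(X;\Q)$ cannot be extracted at the module level, which is exactly the point you left unresolved.

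Your upper bound also rests on an unproved subadditivity of module sectional category for the factorization $X\to X\times X\to X^r\times X$, together with an unspecified filtration argument. The paper avoids this entirely with a sandwich: $\MTC_r(X)\le\TC_r(X)\le r\cat(X)$ on one side, and $\MTC_r(X)\ge\cat(X^{r-1})=(r-1)\cat(X)$ on the other, the latter via the homotopy pullback of $\Delta_r$ along $X^{r-1}\hookrightarrow X^r$, \cite[Proposition 5.5]{FGKV}, and the rational product formula $\cat(X^{r-1})=(r-1)\cat(X)$; subtracting gives the increment bound $2\cat(X)$ immediately. If you wish to keep your route, you would first need to establish the subadditivity of $\MTC$ in the required sense, which is not in the paper and is not obviously routine.
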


Let $X$ be a simply-connected rational space of finite rational type. We define
$$\widehat{\F}_X(x)=\sum_{r=1}^\infty\MTC_{r+1}(X)x^r.$$
As an application of Theorem \ref{MTC cup}, one gets:

\begin{corollary}
  \label{generating MTC}
  Let $X$ be a simply-connected rational space of finite rational type.
  If $\cat(X)=\cl(X;\Q)$, then $\widehat{\F}_X(x)=\frac{P(x)}{(1-x)^2}$ where $P(x)$ is an integer polynomial with $P(1)=\cat(X)$.
\end{corollary}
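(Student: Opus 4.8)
The plan is to deduce the rational form of $\widehat{\F}_X(x)$ from a single structural fact about the integer sequence $a_r:=\MTC_{r+1}(X)$, namely that it is \emph{eventually arithmetic}: there is an $N_0$ and an integer $c$ with $a_{r+1}-a_r=\cat(X)$ for all $r\ge N_0$. Indeed, once this is known, splitting $\widehat{\F}_X(x)=\sum_{r=1}^{N_0-1}a_rx^r+\sum_{r\ge N_0}(\cat(X)\,r+c)x^r$ and placing everything over the common denominator $(1-x)^2$ exhibits $\widehat{\F}_X(x)$ as $\frac{P(x)}{(1-x)^2}$ with $P$ an integer polynomial; evaluating $\lim_{x\to 1}(1-x)^2\widehat{\F}_X(x)$ then gives $P(1)=\cat(X)$. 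So the whole problem reduces to proving that the successive differences $d_r:=\MTC_{r+1}(X)-\MTC_r(X)$ are eventually equal to $\cat(X)$.

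For the lower estimate I would feed the hypothesis $\cat(X)=\cl(X;\Q)$ into Theorem \ref{MTC cup}, which immediately yields $d_r\ge\cat(X)$ for every $r\ge 2$. For the matching upper estimate the key observation is that $\MTC_r(X)$ admits a linear bound whose slope is again exactly $\cat(X)$: since $\MTC_r(X)\le\TC_r(X)$ (as recalled in the introduction) and $\TC_r(X)=\secat(\Delta_r)\le\cat(X^r)\le r\,\cat(X)$ by the standard inequalities for sectional category and for the LS-category of a product, we obtain $\MTC_r(X)\le r\,\cat(X)$ for all $r$.

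The two bounds now sandwich the sequence between lines of the same slope $\cat(X)$, and I would finish by an integrality argument. Telescoping gives $\sum_{r=2}^{N}\bigl(d_r-\cat(X)\bigr)=\MTC_{N+1}(X)-\MTC_2(X)-(N-1)\cat(X)\le 2\cat(X)-\MTC_2(X)$, using the upper bound on $\MTC_{N+1}(X)$. Thus the partial sums of the series $\sum_r\bigl(d_r-\cat(X)\bigr)$ are bounded above, while each term is a non-negative integer by the lower bound; hence only finitely many terms are nonzero, i.e. $d_r=\cat(X)$ for all large $r$. This is exactly the eventual-arithmetic property needed in the first paragraph. The conceptual crux—and the step I expect to carry the real weight—is recognizing that the lower bound of Theorem \ref{MTC cup} and the elementary upper bound $\MTC_r(X)\le r\,\cat(X)$ have identical slope precisely when $\cat(X)=\cl(X;\Q)$; once the slopes match, integrality does the rest with no further input.
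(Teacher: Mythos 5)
Your proposal is correct and follows essentially the same route as the paper: both rest on the two inequalities $\MTC_{r+1}(X)\ge\MTC_r(X)+\cl(X;\Q)$ (from Theorem \ref{MTC cup}) and $\MTC_r(X)\le\TC_r(X)\le r\cat(X)$, the paper then delegating the telescoping/integrality step and the evaluation $P(1)=\cat(X)$ to the proof of Lemma 1 of Farber--Kishimoto--Stanley, which is exactly the argument you have written out in full.
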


If $\MTC_r(X)=\TC_r(X)$, then Theorem \ref{MTC cup} gives an estimate of the growth of $\TC_r(X)$ and Corollary \ref{generating MTC} gives a criterion for Question \ref{question}. As mentioned in \cite[Remark 5.8]{FGKV}, there is a map whose rational sectional category does not coincide with its module sectional category. However, topological complexity has a deeper connection to LS-category than general sectional category, and so in view of the celebrated result of Hess \cite{H}, one can naively expect that $\MTC_r(X)=\TC_r(X)$. Here we give a class of rational spaces, for which one has $\MTC_r(X)=\TC_r(X)$.

\begin{theorem}
  \label{odd}
  Let $X$ be a simply-connected rational space of finite rational type such that $\pi_i(X)=0$ for $i$ even. Then
  $$\MTC_r(X)=\TC_r(X)=(r-1)\cat(X).$$
\end{theorem}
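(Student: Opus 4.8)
The plan is to work rationally throughout, using that for a rational space $X$ one has $\TC_r(X)=\secat_0(\Delta_r)$ and, by definition, $\MTC_r(X)=\mathrm{msecat}(\Delta_r)$, together with the general inequality $\mathrm{msecat}(\Delta_r)\le\secat_0(\Delta_r)$. Thus it suffices to prove the two one-sided estimates $\mathrm{msecat}(\Delta_r)\ge (r-1)\cat(X)$ and $\secat_0(\Delta_r)\le (r-1)\cat(X)$ and then sandwich. First I would record the computation of $\cat(X)$ itself: if $(\Lambda V,d)$ is the minimal Sullivan model, then $V=V^{\mathrm{odd}}$ forces $\Lambda V$ to be exterior, so $\Lambda^{>m}V=0$ for $m=\dim V$ and the top truncation is the identity, giving $\cat(X)\le m$; conversely the top product $v_1\cdots v_m$ is a cocycle that cannot be a coboundary, since for a simply-connected odd space $d(\Lambda^{m-1}V)$ never meets $\Lambda^m V$ (producing the top monomial from $\Lambda^{m-1}V$ would require a degree-$1$ generator). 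Hence $\cat(X)=\dim V=:m$, and by rational additivity of LS-category $\cat(X^{r-1})=(r-1)m$.

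For the lower bound I would exploit the restriction $\iota\colon X^{r-1}\hookrightarrow X^r$, $(x_1,\dots,x_{r-1})\mapsto(x_1,\dots,x_{r-1},x_0)$. Pulling back the relative Sullivan model of $\Delta_r$ along $\iota$ identifies it, up to homotopy, with the model of the rational Ganea fibration computing $\cat(X^{r-1})$. Since module sectional category does not increase under pullback, this yields $\MTC_r(X)=\mathrm{msecat}(\Delta_r)\ge\mathrm{Mcat}(X^{r-1})$, and Hess' theorem $\mathrm{Mcat}=\cat_0$ together with the additivity above gives $\MTC_r(X)\ge\cat(X^{r-1})=(r-1)\cat(X)$.

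For the upper bound I would reduce to the case $r=2$ by the growth inequality $\TC_{r+1}(X)\le\TC_r(X)+\cat(X)$, valid for $r\ge 2$, which yields $\TC_r(X)\le\TC_2(X)+(r-2)\cat(X)$; it then remains to prove the base case $\TC_2(X)\le\cat(X)=m$. Here I would write down the relative model $\Lambda V\otimes\Lambda V\hookrightarrow\Lambda V\otimes\Lambda V\otimes\Lambda\overline V\xrightarrow{\ \simeq\ }\Lambda V$ of $\Delta_2$, where $\overline V=s^{-1}V$ carries the connecting differential $D\overline v=v\otimes 1-1\otimes v+(\text{decomposables})$, and show that the $m$-th fiberwise truncation admits a section over the base, exploiting that $V$ is odd (so that the $m$ generators of $\overline V$ match $\dim V$) and the specific linear part of $D$.

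Combining, $(r-1)\cat(X)\le\MTC_r(X)\le\TC_r(X)\le (r-1)\cat(X)$, forcing equality throughout. The main obstacle is the base case $\TC_2(X)=\cat(X)$: unlike the LS-category computation, the fiber of $\Delta_2$ is rationally $\Omega X$, whose model $\Lambda\overline V$ is a polynomial algebra on even generators, so $\Lambda^{>m}\overline V$ is nonzero and the required section is not formal---one must genuinely use the odd structure of $V$ and the precise form of $D$ to build it. A secondary point to verify carefully is the monotonicity of $\mathrm{msecat}$ under the pullback used in the lower bound, within Carrasquel--Vela's module framework.
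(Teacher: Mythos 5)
Your overall strategy is the right one --- sandwich $\MTC_r\le\TC_r$ between a lower bound $(r-1)\cat(X)$ for $\MTC_r$ and an upper bound $(r-1)\cat(X)$ for $\TC_r$ --- and your lower bound is essentially the paper's: pull $\Delta_r$ back along $X^{r-1}\to X^r$, $(x_1,\dots,x_{r-1})\mapsto(x_1,\dots,x_{r-1},*)$, use monotonicity of module sectional category under homotopy pullback (the paper cites \cite[Proposition 5.5]{FGKV} for exactly this) and the rational product/mapping-theorem facts to get $\MTC_r(X)\ge\cat(X^{r-1})=(r-1)\cat(X)$. Your computation $\cat(X)=\dim V$ is also fine (the paper just cites \cite[Example 6, p.~389]{FHT}). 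The problem is the upper bound. You reduce via $\TC_{r+1}(X)\le\TC_r(X)+\cat(X)$ to the base case $\TC_2(X)\le\cat(X)$, and then you do not prove the base case: you describe the relative model $\Lambda V\otimes\Lambda V\otimes\Lambda\overline V$, observe that $\Lambda\overline V$ is polynomial on even generators so the truncation does not trivially split, and explicitly flag the construction of the required section as ``the main obstacle'' without carrying it out. As written, the upper bound --- and hence the theorem --- is not established. This is a genuine gap, not a presentational one.

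The missing step has a short proof that avoids relative models and induction entirely, and it is where the hypothesis $\pi_{\mathrm{even}}(X)=0$ actually enters. By Lemma \ref{K}, the ideal $K_r=\ker\mu_r$ is generated by the $(r-1)\dim V$ elements $v(i)-v(i+1)$, $v\in V$, $1\le i\le r-1$. Since $V$ is concentrated in odd degrees, each generator is odd, so it squares to zero and any product of $(r-1)\dim V+1$ elements of $K_r$, expanded over this generating set, must repeat a generator and hence vanishes in the free graded-commutative algebra $(\Lambda V)^{\otimes r}$. Thus $K_r^{n+1}=0$ for $n=(r-1)\dim V$, the projection $p_{r,n}$ is the identity, and $\TC_r(X)\le(r-1)\dim V=(r-1)\cat(X)$ for every $r$ at once. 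Note that this gives the correct bound $(r-1)\cat(X)$ directly, whereas your intended route must first prove $\TC_2(X)\le\cat(X)$ by an explicit section of a fiberwise truncation --- a genuinely harder statement than the ideal-theoretic one, and precisely the point at which your argument stalls. If you want to salvage your outline, replace the base-case construction by the observation that $K_2^{\dim V+1}=0$; but at that point the inductive reduction via $\TC_{r+1}\le\TC_r+\cat$ becomes unnecessary.
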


There is an immediate corollary.

\begin{corollary}
  \label{odd corollary}
  Let $X$ be a simply-connected rational space of finite rational type such that $\pi_i(X)=0$ for $i$ even. Then $\F_X(x)=\frac{P(x)}{(1-x)^2}$ where $P(x)$ is an integer polynomial with $P(1)=\cat(X)$.
\end{corollary}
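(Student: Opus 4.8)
The plan is to derive the generating function directly from Theorem \ref{odd}, which already carries all the substantive content. Since $X$ satisfies the hypotheses of Theorem \ref{odd}, I would first invoke that theorem to conclude $\TC_r(X)=(r-1)\cat(X)$ for every $r$, and in particular $\TC_{r+1}(X)=r\,\cat(X)$. The crucial point is that under the odd-homotopy hypothesis the sequence $\TC_r(X)$ is \emph{exactly} linear in $r$, not merely eventually linear, so no analysis of the tail of the sequence is required and every term of $\F_X(x)$ is known explicitly.

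Substituting into the definition of $\F_X(x)$, I would obtain
$$\F_X(x)=\sum_{r=1}^\infty\TC_{r+1}(X)\,x^r=\cat(X)\sum_{r=1}^\infty r\,x^r.$$
The only remaining step is the standard power-series identity $\sum_{r\ge 1}r\,x^r=\frac{x}{(1-x)^2}$, obtained by differentiating the geometric series $\sum_{r\ge 0}x^r=\frac{1}{1-x}$ and multiplying by $x$. This yields
$$\F_X(x)=\frac{\cat(X)\,x}{(1-x)^2},$$
so that one reads off $P(x)=\cat(X)\,x$.

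Finally I would check that $P(x)$ meets the stated requirements: $\cat(X)$ is a nonnegative integer, so $P(x)=\cat(X)\,x$ is an integer polynomial, and $P(1)=\cat(X)$ exactly as demanded.

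I do not expect any real obstacle in this argument: the entire difficulty is absorbed into Theorem \ref{odd}, and what remains is the elementary summation of a power series. The only point worth flagging is conceptual rather than technical, namely that it is the \emph{exact} (as opposed to asymptotic) linearity furnished by Theorem \ref{odd} that forces $P(x)$ to be a genuine polynomial with the clean value $P(1)=\cat(X)$, placing $X$ squarely in the affirmative answer to Question \ref{question}.
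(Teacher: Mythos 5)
Your proposal is correct and matches the paper's intent: the paper states this is an immediate corollary of Theorem \ref{odd} and gives no separate proof, and your explicit computation $\F_X(x)=\cat(X)\sum_{r\ge 1}r\,x^r=\cat(X)x/(1-x)^2$ with $P(x)=\cat(X)\,x$ is exactly the calculation being left to the reader. The observation that exact (not merely eventual) linearity is what makes $P(x)$ the clean monomial $\cat(X)\,x$ is a fair and accurate remark.
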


By applying Theorem \ref{odd}, we give a rational space $X$, to which the criterion of Farber, Kishimoto and Stanley \cite{FKS} mentioned in Section 1 does not apply, but $\F_X(x)$ satisfies the condition of Question \ref{question}.

\begin{example}
  \label{odd example}
  Let $X$ be a simply-connected rational space whose minimal model is given by
  $$\Lambda(x,y,z),\quad|x|=|y|=3,\,|z|=5,\quad dx=dy=0,\,dz=xy.$$
  Then by Corollary \ref{odd corollary}, $\F_X(x)$ satisfies the condition of Question \ref{question}. Since $\widetilde{H}^*(X;\Q)=\langle x,y,xz,yz,xyz\rangle$, one has $\widetilde{H}^*(X;\Q)^3=0$. Then it follows that $\widetilde{H}^*(X^r;\Q)^{2r+1}=0$, implying $\zcl_r(X;\Q)\le 2r$. On the other hand, the Toomer invariant of $X$ is 3, which coincides with $\cat(X)$ because $X$ is elliptic. Then by Theorem \ref{odd},
  $$\MTC_r(X)=(r-1)\cat(X)=3(r-1).$$
  In particular, $\zcl_r(X;\Q)<\MTC_r(X)$ for all $r\ge 4$.
\end{example}

\textit{Acknowledgement:} The authors would like to thank Michael Farber and John Oprea for pointing out the error in the earlier version. The first author was partly supported by JSPS KAKENHI 17K05248.

\textit{Conflict of interest:} There is not conflict of interest.


\section{Instability of the growth}

As mentioned above, Question \eqref{question} is asking whether or not $\TC_{r+1}(X)=\TC_r(X)+\cat(X)$ for all $r$ large enough. In this section, we observe why we need to consider large $r$, instead of all $r$.

Let $\Gamma$ be a finite simple graph. Let $c(\Gamma)$ be the cardinality of the maximal clique of $\Gamma$, and let $z_r(\Gamma)$ be the maximum of $\sum_{i=1}^r|C_i|$ where $C_1,\ldots,C_r$ are cliques of $\Gamma$ such that $\bigcap_{i=1}^rC_i=\emptyset$.

\begin{example}
  \label{graph}
  Let $\Gamma$ be the following graph.
  \begin{center}
    \begin{picture}(80,60)
      \thicklines
      \put(0,0){\circle*{4.5}}
      \put(40,0){\circle*{4.5}}
      \put(80,0){\circle*{4.5}}
      \put(40,60){\circle*{4.5}}
      \put(20,30){\circle*{4.5}}
      \put(60,30){\circle*{4.5}}
      \put(0,0){\line(1,0){80}}
      \put(0,0){\line(2,3){40}}
      \put(80,0){\line(-2,3){40}}
      \put(20,30){\line(1,0){40}}
      \put(20,30){\line(2,-3){20}}
      \put(60,30){\line(-2,-3){20}}
    \end{picture}
  \end{center}
  Clearly, $c(\Gamma)=3$, and since any two of 3-cliques intersect, $z_2(\Gamma)=5$. Since three side 3-cliques do not intersect, $z_3(\Gamma)=9$, implying $z_r(\Gamma)=3r$ for $r\ge 3$.
\end{example}

Let $A_\Gamma$ denote the right-angled Artin group over $\Gamma$. The following are proved by Gonzalez, Gutierrez and Yuzvinsky \cite{GGY} and Farber and Oprea \cite{FO}.

\begin{theorem}
  \label{RAAG}
  For a finite simple graph $\Gamma$ and $r\ge 2$,
  $$\cat(BA_\Gamma)=c(\Gamma)\quad\text{and}\quad\TC_r(BA_\Gamma)=z_r(\Gamma).$$
\end{theorem}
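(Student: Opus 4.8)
The plan is to treat the two assertions separately after fixing a convenient model. Recall that $BA_\Gamma$ may be taken to be the Salvetti complex: the subcomplex of the torus $(S^1)^V$ (with $V$ the vertex set of $\Gamma$) whose cells are the subtori $(S^1)^\sigma$ with $\sigma$ ranging over the cliques of $\Gamma$. Hence $\dim BA_\Gamma=c(\Gamma)$, and over any field $\Bbbk$ its cohomology is the exterior face ring
$$H^*(BA_\Gamma;\Bbbk)=\Lambda(x_v : v\in V)/(x_vx_w : \{v,w\}\notin E(\Gamma)),\qquad |x_v|=1,$$
so that a monomial $x_\sigma=\prod_{v\in\sigma}x_v$ is nonzero exactly when $\sigma$ is a clique. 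By Künneth, $H^*((BA_\Gamma)^r;\Bbbk)$ is the $r$-fold tensor power, which is again an exterior face ring, now on generators $x_v^{(i)}$ ($v\in V$, $1\le i\le r$), where $x_v^{(i)}$ denotes $x_v$ placed in the $i$-th tensor factor, and $\Delta_r^*$ is the iterated cup product.

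First I would dispose of the category statement by the sandwich $\cl(X;\Bbbk)\le\cat(X)\le\dim X$. A maximal clique $\sigma$ gives a nonzero product $x_\sigma$ of $c(\Gamma)$ one-dimensional classes, so $\cl(BA_\Gamma;\Bbbk)\ge c(\Gamma)$; since every squarefree monomial of length exceeding $c(\Gamma)$ vanishes, $\cl(BA_\Gamma;\Bbbk)=c(\Gamma)$. Combined with $\dim BA_\Gamma=c(\Gamma)$, this forces $\cat(BA_\Gamma)=c(\Gamma)$.

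For the lower bound $\TC_r(BA_\Gamma)\ge z_r(\Gamma)$ I would exhibit an explicit nonzero product of $z_r(\Gamma)$ zero-divisors, using $\TC_r\ge\zcl_r$. Fix cliques $C_1,\dots,C_r$ with $\bigcap_i C_i=\emptyset$ realizing $\sum_i|C_i|=z_r(\Gamma)$. For each vertex $v$ set $S_v=\{i : v\in C_i\}$; the hypothesis $\bigcap_i C_i=\emptyset$ says precisely that $S_v\subsetneq\{1,\dots,r\}$ for every $v$, so I may choose $j(v)\notin S_v$. Each $x_v^{(i)}-x_v^{(j(v))}$ lies in $\ker(\Delta_r^*)$, and I would form
$$P=\prod_{v\in V}\ \prod_{i\in S_v}\bigl(x_v^{(i)}-x_v^{(j(v))}\bigr),$$
a product of $\sum_v|S_v|=\sum_i|C_i|=z_r(\Gamma)$ zero-divisors. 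Expanding the inner product and using $(x_v^{(j(v))})^2=0$, the only surviving monomial placing $v$ in exactly the slots $S_v$ is $\prod_{i\in S_v}x_v^{(i)}$, while every other surviving monomial places some $v$ in the slot $j(v)\notin S_v$ and so has a different multidegree. Hence the coefficient of $\prod_v\prod_{i\in S_v}x_v^{(i)}=\pm\bigotimes_{i=1}^r x_{C_i}$ in $P$ is a unit, and this class is nonzero because each $C_i$ is a clique. Therefore $P\ne0$, giving $\zcl_r(BA_\Gamma;\Bbbk)\ge z_r(\Gamma)$.

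The hard part will be the matching upper bound $\TC_r(BA_\Gamma)\le z_r(\Gamma)$, i.e. sharpness of the zero-divisor estimate. Here I would exploit that $BA_\Gamma$ is aspherical, so $\TC_r(BA_\Gamma)=\secat(\Delta_r)$ is governed by $A_\Gamma$: the diagonal inclusion $A_\Gamma\hookrightarrow A_\Gamma^r$ realizes $\Delta_r$ on classifying spaces, and its sectional category equals the minimal dimension of a classifying space for the family of subgroups of $A_\Gamma^r$ generated by the diagonal (the viewpoint of Farber--Oprea). The task then becomes the construction of such a model, or equivalently of an open cover of $(BA_\Gamma)^r$ by $z_r(\Gamma)+1$ pieces each admitting a homotopy section of $\Delta_r$, organized by the poset of cliques of $\Gamma$. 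The genuine obstacle is the combinatorial bookkeeping that makes the number of pieces equal to $z_r(\Gamma)$ rather than the crude $r\,c(\Gamma)$; I expect this to parallel the polyhedral-product cover of Gonzalez--Gutierrez--Yuzvinsky, with the condition $\bigcap_i C_i=\emptyset$ reappearing exactly as the condition that a common section exists over the corresponding piece.
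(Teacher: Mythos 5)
The paper does not prove this statement at all: Theorem \ref{RAAG} is quoted from Gonzalez--Gutierrez--Yuzvinsky \cite{GGY} and Farber--Oprea \cite{FO}, so there is no internal proof to compare against. Judged on its own terms, your proposal correctly establishes the easy half. The computation $\cat(BA_\Gamma)=c(\Gamma)$ via the sandwich $\cl\le\cat\le\dim$ on the Salvetti complex is fine, and your zero-divisor product $P=\prod_v\prod_{i\in S_v}\bigl(x_v^{(i)}-x_v^{(j(v))}\bigr)$ is a clean and correct proof of $\TC_r(BA_\Gamma)\ge\zcl_r(BA_\Gamma;\Bbbk)\ge z_r(\Gamma)$: the multidegree argument isolating the coefficient of $\bigotimes_i x_{C_i}$ is sound, since a monomial in the tensor power of the exterior face ring is determined by which vertices occupy which slots, and $\bigotimes_i x_{C_i}$ is a nonzero basis element because each $C_i$ is a clique.

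The genuine gap is the upper bound $\TC_r(BA_\Gamma)\le z_r(\Gamma)$, which you explicitly defer, and which is the substantive half of the theorem (the lower bound alone pins down nothing). What you offer in its place is not yet an argument. The assertion that $\secat(\Delta_r)$ of an aspherical space ``equals the minimal dimension of a classifying space for the family of subgroups generated by the diagonal'' is not a known theorem: unlike Eilenberg--Ganea for $\cat$, there is no such clean identification for $\TC_r$ of aspherical spaces in general, and Farber--Oprea extract only bounds from that viewpoint. The construction of a cover of $(BA_\Gamma)^r$ by $z_r(\Gamma)+1$ sectioned pieces --- where the condition $\bigcap_i C_i=\emptyset$ must reappear as the exact obstruction --- is precisely the content of the cited results, and saying you ``expect this to parallel'' the polyhedral-product argument of \cite{GGY} does not supply it. To complete the proof you would have to carry out that construction (or the equivariant/cohomological-dimension upper bound of \cite{FO}) in detail, or else do what the paper does and simply cite \cite{GGY,FO}.
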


Now we are ready to prove the following, which shows us the reason why we need to consider the growth of $\TC_r(X)$ for all $r$ large enough, instead of all $r$.

\begin{proposition}
  \label{instability}
  Given an integer $n\ge 2$, there is a space $X$ such that $\TC_{n+1}(X)-\TC_n(X)>\cat(X)$ and $\TC_{r+1}(X)-\TC_r(X)=\cat(X)$ for all $r$ large enough.
\end{proposition}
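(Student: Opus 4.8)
The plan is to realize $X$ as the classifying space $BA_\Gamma$ of a right-angled Artin group, so that Theorem \ref{RAAG} reduces the whole statement to the purely combinatorial quantities $c(\Gamma)$ and $z_r(\Gamma)$. Example \ref{graph} already exhibits the phenomenon for $n=2$, where the single anomalous jump occurs at the step $2\to 3$. So I would look for a family of graphs $\Gamma_n$ generalizing that picture, in which the jump is pushed to the step $n\to n+1$. Writing $c=c(\Gamma_n)$, the two things to arrange are: (i) $z_r(\Gamma_n)=cr$ for all $r\ge n+1$, forcing $z_{r+1}-z_r=c=\cat(X)$ for all large $r$; and (ii) $z_n(\Gamma_n)<cn$, which forces $z_{n+1}-z_n=(n+1)c-z_n>c$ since $z_{n+1}=(n+1)c$. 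As $n\ge 2$, all values of $r$ involved are $\ge 2$, so Theorem \ref{RAAG} applies throughout.

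Both (i) and (ii) are governed by one mechanism: how many maximal cliques one must take before their common intersection empties. Since repeating a clique never helps empty an intersection, if some $n+1$ maximal cliques $C_1,\dots,C_{n+1}$ (each of size $c$) satisfy $\bigcap_j C_j=\emptyset$, then padding with repeats gives $z_r=cr$ for every $r\ge n+1$, settling (i). Conversely, if \emph{every} $n$ maximal cliques have nonempty common intersection, then any family of $n$ cliques with empty intersection must contain a non-maximal one, whence $z_n\le (n-1)c+(c-1)=cn-1<cn$, settling (ii). Thus the problem becomes: build a graph whose maximal cliques form a family of ``Helly number'' exactly $n+1$ — every $n$ of them meet, but the full collection of $n+1$ does not.

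I would construct such a $\Gamma_n$ directly. Take distinguished vertices $w_1,\dots,w_{n+1}$ spanning $K_{n+1}$, and for each $j$ a private vertex $u_j$ joined to every $w_i$ with $i\ne j$ but to nothing else (in particular $u_j\not\sim w_j$, and the $u_j$ are mutually non-adjacent). Then the maximal cliques are exactly $W=\{w_1,\dots,w_{n+1}\}$ and $C_j=(W\setminus\{w_j\})\cup\{u_j\}$, each of size $c=n+1$. A direct count using $w_i\in C_j\iff i\ne j$ gives $\bigcap_{j=1}^{n+1}C_j=\emptyset$, while omitting any single $C_m$ leaves $w_m$ in the intersection; the same count shows every $n$-element sub-family of $\{W,C_1,\dots,C_{n+1}\}$ meets. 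Feeding this into the previous paragraph yields $z_r(\Gamma_n)=(n+1)r$ for $r\ge n+1$ and $z_n(\Gamma_n)\le n(n+1)-1$, so that with $X=BA_{\Gamma_n}$ one gets $\TC_{n+1}(X)-\TC_n(X)\ge n+2>n+1=\cat(X)$, while $\TC_{r+1}(X)-\TC_r(X)=n+1=\cat(X)$ for all $r\ge n+1$. For $n=2$ this recovers Example \ref{graph}.

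I expect the main obstacle to be the tension inherent in the Helly-type requirement. To make the $n$-fold intersections nonempty one is pushed toward large overlaps among the distinguished cliques, and the natural way to force them to meet $n$-at-a-time is to let them share the vertices of a common $K_{n+1}$; but that very $K_{n+1}$ threatens to be a clique larger than the $C_j$, wrecking the value of $c(\Gamma_n)$. The private, mutually non-adjacent vertices $u_j$ (each omitting its own $w_j$) are exactly what resolves this, and the delicate point in a full write-up is to verify rigorously that $W$ and the $C_j$ are the \emph{only} maximal cliques, since the claim ``any $n$ maximal cliques meet'' must quantify over all of them and not merely over the distinguished family.
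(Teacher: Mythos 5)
Your proposal is correct and follows essentially the same route as the paper: the graph you build (a clique $K_{n+1}$ plus, for each vertex, a private vertex adjacent to all the others) is isomorphic to the paper's $\Gamma_n$, the $1$-skeleton of an $n$-simplex with an $n$-simplex attached on each $(n-1)$-face, and both arguments then reduce to Theorem \ref{RAAG} via the values of $c(\Gamma_n)$ and $z_r(\Gamma_n)$. The only cosmetic difference is that you bound $z_n(\Gamma_n)$ from above by a Helly-type argument where the paper computes $z_r(\Gamma_n)$ exactly for all $r\le n$; your bound $n(n+1)-1$ agrees with the paper's exact value and suffices.
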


\begin{proof}
  Let $\Gamma_n$ be the 1-skeleton of a simplicial complex obtained from an $n$-simplex by attaching an $n$-simplex on each $(n-1)$-face. Then $\Gamma_2$ is the graph in Example \ref{graph}. Let $v_0,\ldots,v_n$ be vertices of the base simplex of $\Gamma_n$, and let $w_0,\ldots,w_n$ be vertices of attached simplices, which are not in the base simplex. Then the vertex set of $\Gamma_n$ is $\{v_0,\ldots,v_n,w_0,\ldots,w_n\}$, and we may assume that maximal cliques of $\Gamma_n$ are $C_0=\{v_0,\ldots,v_n\}$ and $C_{i+1}=\{v_0,\ldots,\widehat{v_i},\ldots,v_n,w_i\}$ for $0\le i\le n$. Then, in particular, $c(\Gamma_n)=n+1$. For $r\le n$, any $r$ of $C_0,\ldots,C_{n+1}$ intersect, and $C_1\cap\cdots\cap C_{r-1}\cap(C_r-\{v_{r},\ldots,v_n\})=\emptyset$. Then $z_r(\Gamma_n)=(r-1)(n+1)+r$. For $r>n$, $C_1\cap\cdots\cap C_{n+1}\cap\underbrace{C_1\cap\cdots\cap C_1}_{r-n-1}=\emptyset$, implying $z_r(\Gamma_n)=r(n+1)$. Thus by Theorem \ref{RAAG}, the proof is complete.
\end{proof}

\begin{remark}
  We would like to thank John Oprea for letting us know the instability of the growth of $\TC_r(BA_{\Gamma_2})$, where Proposition \ref{instability} is its straightforward generalization. He also informed us, in a private communication, that Michael Farber and he also discovered the graph $\Gamma_n$ and showed further that the numerator polynomial $P(x)$ in Question \ref{question} is of degree $n+1$, implying that the degree of $P(x)$ can be arbitrary.
\end{remark}


\section{Module topological complexity}

In what follows, let $X$ be a simply-connected rational space of finite rational type. Let $(\Lambda V,d)$ be any Sullivan model of $X$, and let $K_r$ denote the kernel of the multiplication
$$\mu_r\colon(\Lambda V)^{\otimes r}\to\Lambda V,\quad v_1\otimes\cdots\otimes v_r\mapsto v_1\cdots v_r.$$
Then as in \cite{C}, $\TC_r(X)$ equals the least integer $n$ such that the projection
$$p_{r,n}\colon(\Lambda V)^{\otimes r}\to(\Lambda V)^{\otimes r}/K_r^{n+1}$$
has a differential graded algebra homotopy retraction. Here a homotopy retraction of $p_{r,n}$ means a retraction of a Sullivan model $(\Lambda V)^{\otimes r}\to(\Lambda V)^{\otimes r}\otimes\Lambda W$ for the projection $p_{r,n}$.

For $v\in V$ and $1\le i\le r$, let $v(i)=1\otimes\cdots\otimes 1\otimes v\otimes 1\otimes\cdots\otimes 1\in(\Lambda V)^{\otimes r}$ where $v$ is in the $i$-th position. Analogously to \cite[Lemma 2.3]{JMP}, one has:

\begin{lemma}
  \label{K}
  For $r\ge 2$,
  $$K_r=(v(i)-v(i+1)\mid v\in V,\,1\le i\le r-1).$$
\end{lemma}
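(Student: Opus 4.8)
The plan is to prove the two inclusions separately, with the nontrivial direction handled by identifying the quotient algebra explicitly. Write $J=(v(i)-v(i+1)\mid v\in V,\,1\le i\le r-1)$ for the ideal on the right-hand side. The inclusion $J\subseteq K_r$ is immediate: since $\mu_r(v(i))=v$ for every $i$, each generator satisfies $\mu_r(v(i)-v(i+1))=v-v=0$, so it lies in $K_r=\ker\mu_r$, and hence so does the ideal it generates.

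For the reverse inclusion $K_r\subseteq J$, the idea is to show that $\mu_r$ induces an \emph{isomorphism} on the quotient by $J$, which forces $\ker\mu_r=J$. Since $J\subseteq K_r$, the map $\mu_r$ factors through a map $\bar\mu_r\colon(\Lambda V)^{\otimes r}/J\to\Lambda V$. To invert it, I would use the inclusion into the first tensor factor $\iota\colon\Lambda V\to(\Lambda V)^{\otimes r}$, $v\mapsto v(1)$, which is an algebra map satisfying $\mu_r\circ\iota=\mathrm{id}_{\Lambda V}$ because $\mu_r(v(1))=v$. Composing $\iota$ with the projection gives $\bar\iota\colon\Lambda V\to(\Lambda V)^{\otimes r}/J$, and $\bar\mu_r\circ\bar\iota=\mathrm{id}$ follows at once. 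The crux is then the reverse composite: in the quotient by $J$ one has $v(1)\equiv v(2)\equiv\cdots\equiv v(r)$ for every $v\in V$, so every generator $v(i)$ of $(\Lambda V)^{\otimes r}$ maps to the image of $v(1)$. Consequently $\bar\iota$ is surjective, and checking $\bar\iota\circ\bar\mu_r=\mathrm{id}$ reduces to the generators $v(i)$, where $\bar\iota(\bar\mu_r(\overline{v(i)}))=\bar\iota(v)=\overline{v(1)}=\overline{v(i)}$. Thus $\bar\mu_r$ is an isomorphism, and therefore $K_r=\ker\mu_r=J$.

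The main point to get right is the bookkeeping in the last step: one must use that $(\Lambda V)^{\otimes r}\cong\Lambda(V^{\oplus r})$ is generated as an algebra precisely by the elements $v(i)$, so that the relations $v(i)\equiv v(i+1)$ collapse all $r$ copies to a single copy of $V$ and make $\bar\iota$ surjective. Everything else is formal, and this argument is the exact $r$-fold analogue of the classical fact (used for $r=2$ in \cite[Lemma 2.3]{JMP}) that the kernel of the multiplication $\Lambda V\otimes\Lambda V\to\Lambda V$ is generated by the elements $v\otimes1-1\otimes v$. I note that $J$ is automatically a differential ideal, since a short induction on word length gives $\omega(i)-\omega(i+1)\in J$ for all $\omega\in\Lambda V$; but this need not be checked in advance, as it follows from the identity $K_r=J$ together with the fact that $K_r$ is the kernel of a differential graded algebra map.
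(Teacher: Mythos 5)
Your proof is correct. Note that the paper itself gives no argument for this lemma, simply asserting it ``analogously to [JMP, Lemma~2.3]'' (the $r=2$ case, where the kernel of $\Lambda V\otimes\Lambda V\to\Lambda V$ is generated by the elements $v\otimes 1-1\otimes v$); your self-contained proof, exhibiting $\bar\iota$ as a two-sided inverse of $\bar\mu_r$ on the quotient by $J$ and using that algebra maps agreeing on the generators $v(i)$ coincide, is a clean way to supply the missing details and is essentially the standard argument one would extract from that reference.
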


In \cite{JMP}, the module topological complexity $\MTC(X)$ is defined, and we aim to consider its higher analog. The higher topological complexity $\TC_r(X)$ is, by definition, the sectional category of the diagonal map $\Delta_r\colon X\to X^r$. In \cite{C,C1}, the sectional category for rational spaces are studied, and the module sectional category, which generalizes the module topological complexity, is introduced. Now we specialize the module sectional category to define the higher module sectional category.

\begin{definition}
  The $r$-th module topological complexity $\MTC_r(X)$ is defined to be the least integer $n$ such that the projection $p_{r,n}$ has a differential $(\Lambda V)^{\otimes r}$-module homotopy retraction.
\end{definition}

For a space $Y$ and a field $\Bbbk$, let $\zcl_r(Y;\Bbbk)$ be the greatest $n$ such that there are elements $x_1,\ldots,x_n$ in the kernel of the product $H^*(Y;\Bbbk)^{\otimes r}\to H^*(Y;\Bbbk)$ such that $x_1\cdots x_n\ne 0$. Then one has:

\begin{proposition}
  \label{zcl<MTC<TC}
  For $r\ge 2$,
  $$\zcl_r(X;\Q)\le\MTC_r(X)\le\TC_r(X).$$
\end{proposition}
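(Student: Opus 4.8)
The plan is to prove the two inequalities separately, and neither requires any computation beyond unwinding the definitions of the two flavours of homotopy retraction.

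For the right-hand inequality $\MTC_r(X)\le\TC_r(X)$, I would observe that a differential graded algebra homotopy retraction is automatically a module homotopy retraction. Concretely, write $A=(\Lambda V)^{\otimes r}$ and let $j\colon A\to A\otimes\Lambda W$ be the Sullivan model of $p_{r,n}$, so that $A\otimes\Lambda W$ is an $A$-module through $j$. If $\rho\colon A\otimes\Lambda W\to A$ is a DGA map with $\rho\circ j\simeq\mathrm{id}_A$, then for $a\in A$ and $u\in A\otimes\Lambda W$ one has $\rho(j(a)\cdot u)=\rho(j(a))\cdot\rho(u)=a\cdot\rho(u)$, so $\rho$ is $A$-linear. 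Hence every $n$ realizing a DGA retraction of $p_{r,n}$ also realizes a module retraction, which gives $\MTC_r(X)\le\TC_r(X)$.

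For the left-hand inequality $\zcl_r(X;\Q)\le\MTC_r(X)$, set $n=\MTC_r(X)$ and let $\rho$ be a module homotopy retraction of $p_{r,n}$, where $m\colon A\otimes\Lambda W\to A/K_r^{n+1}$ is the quasi-isomorphism with $m\circ j=p_{r,n}$. First I would note that $\rho$ is in particular a morphism of cochain complexes with $\rho\circ j\simeq\mathrm{id}$, so on cohomology $H(\rho)\circ H(j)=\mathrm{id}$; thus $H(j)$ is injective, and since $H(m)$ is an isomorphism and $p_{r,n}^*=H(m)\circ H(j)$, the map $p_{r,n}^*$ is injective. Next I would show that every product of $n+1$ zero-divisors lies in $\ker p_{r,n}^*$. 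A zero-divisor class $\alpha\in\ker\bigl(H^*(A)\to H^*(\Lambda V)\bigr)$ can be represented by a cocycle in $K_r$: pick any cocycle $a$ for $\alpha$, write $\mu_r(a)=db$, lift $b$ along the surjection $\mu_r$ to $\hat b\in A$, and replace $a$ by $a-d\hat b\in K_r$. Then for zero-divisor classes $x_1,\dots,x_{n+1}$ with representatives $\tilde x_i\in K_r$, the product $\tilde x_1\cdots\tilde x_{n+1}$ is a cocycle in $K_r^{n+1}$, hence is sent to $0$ by $p_{r,n}$; therefore $x_1\cdots x_{n+1}\in\ker p_{r,n}^*$. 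Combined with injectivity of $p_{r,n}^*$ this forces $x_1\cdots x_{n+1}=0$, so no product of $n+1$ zero-divisors survives and $\zcl_r(X;\Q)\le n$.

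The only genuinely delicate points, rather than true obstacles, are the two reductions in the second step: that a zero-divisor class always admits a representative inside the ideal $K_r$ (which is where surjectivity of $\mu_r$ enters, and where Lemma \ref{K} pins down $K_r$), and that a module homotopy retraction — a priori weaker than a DGA one — already forces $p_{r,n}^*$ to be injective. Once these are in place the cup-length estimate is immediate, and it is worth emphasizing that the module structure of $\rho$ is never actually used for the left inequality: only the underlying cochain-level retraction enters, so the same argument simultaneously recovers $\zcl_r(X;\Q)\le\TC_r(X)$.
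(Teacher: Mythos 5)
Your proof is correct and follows essentially the same route as the paper: the right-hand inequality is immediate from the definitions (a DGA retraction of the Sullivan model is in particular a module retraction), and the left-hand inequality uses injectivity of $p_{r,n}^*$ together with the fact that zero-divisor classes are represented by cocycles in $K_r$. Your explicit lifting of a representative into $K_r$ via surjectivity of $\mu_r$ is just the hands-on version of the paper's appeal to the long exact sequence of $0\to K_r\to(\Lambda V)^{\otimes r}\to\Lambda V\to 0$.
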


\begin{proof}
  Suppose $\MTC_r(X)=n$. Then the projection $p_{r,n}$ is injective in cohomology, and so there are no $x_0,\ldots,x_n\in K_r$ such that the product $x_0\cdots x_n$ is a cocycle representing a non-trivial cohomology class of $X$. Consider the long exact sequence associated with a short exact sequence of cochain complexes
  $$0\to K_r\to(\Lambda V)^{\otimes r}\xrightarrow{\mu_r}\Lambda V\to 0.$$
  Then since $(\mu_r)_*\colon H^*((\Lambda V)^{\otimes r})\to H^*(\Lambda V)$ is surjective, its kernel is $H^*(K_r)$, implying $\zcl_r(X;\Q)\le n=\MTC_r(X)$. The inequality $\MTC_r(X)\le\TC_r(X)$ follows at once from the definition.
\end{proof}


\section{Proofs of main results}

\begin{lemma}
  \label{lower bound}
  For $r\ge 2$,
  $$\cl(X;\Q)\le\MTC_{r+1}(X)-\MTC_r(X).$$
\end{lemma}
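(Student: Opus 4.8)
The plan is to prove the following reduction: a differential $(\Lambda V)^{\otimes(r+1)}$-module homotopy retraction of $p_{r+1,N}$ with $N=\MTC_{r+1}(X)$ yields a differential $(\Lambda V)^{\otimes r}$-module homotopy retraction of $p_{r,N-c}$, where $c=\cl(X;\Q)$. This gives $\MTC_r(X)\le N-c$, which is exactly the claimed inequality. First I would record the algebraic maps relating the two levels. Writing $(\Lambda V)^{\otimes(r+1)}=(\Lambda V)^{\otimes r}\otimes\Lambda V$, let $\sigma\colon(\Lambda V)^{\otimes r}\to(\Lambda V)^{\otimes(r+1)}$ be $a\mapsto a\otimes 1$ and let $\pi=\mathrm{id}\otimes\epsilon\colon(\Lambda V)^{\otimes(r+1)}\to(\Lambda V)^{\otimes r}$, where $\epsilon\colon\Lambda V\to\Q$ is the augmentation (a DGA map since the model is minimal). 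Both are DGA maps with $\pi\sigma=\mathrm{id}$, and, being algebra maps splitting one another, each is linear over $(\Lambda V)^{\otimes r}$ once $(\Lambda V)^{\otimes(r+1)}$ is given the module structure induced by $\sigma$. Using Lemma \ref{K} one checks $\sigma(K_r^{\,k})\subseteq K_{r+1}^{\,k}$, so $\sigma$ descends to the quotients compatibly with the projections $p_{r,\bullet}$ and $p_{r+1,\bullet}$.

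Next I would bring in the cup length. Choose cocycles $\omega_1,\dots,\omega_c\in\Lambda V$ whose product $\bar\omega=\omega_1\cdots\omega_c$ represents a nonzero class of $H^*(X;\Q)$, realizing $\cl(X;\Q)=c$. For each $j$ the element $\eta_j=\omega_j(r)-\omega_j(r+1)$ lies in $K_{r+1}$ and is a cocycle, so $\Omega=\eta_1\cdots\eta_c$ is a cocycle in $K_{r+1}^{\,c}$. Multiplication by $\Omega$ carries $K_{r+1}^{\,N+1-c}$ into $K_{r+1}^{\,N+1}$, hence $a\mapsto[\Omega\,\sigma(a)]$ defines an $(\Lambda V)^{\otimes r}$-module map $\theta\colon(\Lambda V)^{\otimes r}/K_r^{\,N+1-c}\to(\Lambda V)^{\otimes(r+1)}/K_{r+1}^{\,N+1}$ that raises the filtration degree by exactly $c$. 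Composing the given retraction $\rho$ of $p_{r+1,N}$ with $\theta$ and $\pi$ produces a module map $\widetilde\rho=\pi\circ\rho\circ\theta$, and a direct computation using $\rho\circ p_{r+1,N}\simeq\mathrm{id}$ together with $\pi(\eta_j)=\omega_j(r)$ gives $\widetilde\rho\circ p_{r,N-c}\simeq$ multiplication by $\bar\omega(r)=(\omega_1\cdots\omega_c)(r)$.

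The main obstacle is precisely this last point: the construction produces a retraction only up to multiplication by the cup-length class $\bar\omega(r)$ rather than the identity, since merging the last two factors annihilates $\Omega$. To finish one must upgrade this to an honest retraction, and here the module (rather than merely cohomological) nature of the problem is essential: working in the homotopy category of $(\Lambda V)^{\otimes r}$-modules, the plan is to show that the connecting map obstructing a retraction of $p_{r,N-c}$ is killed after multiplication by $\bar\omega(r)$, and then to use that, on the relevant retract, multiplication by the nonzero class $\bar\omega$ is invertible, so the obstruction already vanishes. Equivalently, one reformulates $\MTC_r$ through a module cup-length invariant that is manifestly superadditive under the decomposition $K_{r+1}=K_r\cdot(\Lambda V)^{\otimes(r+1)}+(v(r)-v(r+1)\mid v\in V)$, the second summand contributing $\zcl_2(X;\Q)\ge\cl(X;\Q)$ via Proposition \ref{zcl<MTC<TC}. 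Throughout, simple-connectivity and finite rational type ensure that the minimal model and the retraction may be chosen so that these homotopies and the nonvanishing of $\bar\omega$ are controlled; the nontriviality of the length-$c$ product $\omega_1\cdots\omega_c$, i.e. the very definition of $\cl(X;\Q)$, is exactly what powers the degree shift.
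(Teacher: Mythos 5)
Your setup is essentially the paper's: you form the cocycle $\Omega=\eta_1\cdots\eta_c$ with $\eta_j=\omega_j(r)-\omega_j(r+1)\in K_{r+1}$, observe that multiplication by $\Omega$ shifts the $K$-adic filtration by $c$, and compose with the given homotopy retraction of $p_{r+1,N}$. You also correctly identify the crux: collapsing the last tensor factor by $\pi=\mathrm{id}\otimes\epsilon$ kills all but one term of $\Omega$, so $\pi\circ\rho\circ\theta\circ p_{r,N-c}$ is only multiplication by $\pm\bar\omega(r)$, not the identity. But your proposed repair does not work. Multiplication by $\bar\omega$ is a positive-degree, nilpotent operator on $H^*(X;\Q)$ (indeed $\bar\omega^2$ already involves a product of length $2c>\cl(X;\Q)$), so it is not invertible on any nonzero retract, and the alternative sketch via a ``manifestly superadditive module cup-length'' is not a construction. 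As written, the argument proves only that a retraction exists after multiplying by $\bar\omega(r)$, which is strictly weaker than the lemma.

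The missing idea is to replace the augmentation $\pi$ by an honest differential graded $(\Lambda V)^{\otimes r}$-module retraction $q$ of the map $f(x)=(x\otimes 1)\Omega$ itself, so that $q\circ f=\mathrm{id}$ on the nose and the composite $q\circ\rho\circ\theta$ retracts $p_{r,N-c}$ up to homotopy. This is where the nontriviality of $[\bar\omega]$ actually enters: since $\bar\omega$ is a cocycle that is not a coboundary and we work over the field $\Q$, one can choose a cochain-complex splitting $\Lambda V=\langle\bar\omega\rangle\oplus W$ with $1\in W$ and all proper subproducts $\omega_{i_1}\cdots\omega_{i_k}$ ($k<c$) in $W$. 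Expanding $\Omega$, every term except $\pm\bar\omega(r+1)$ has a proper subproduct in the last factor, so $(\Lambda V)^{\otimes(r+1)}=\bigl((\Lambda V)^{\otimes r}\otimes 1\bigr)\cdot\langle\Omega\rangle\oplus\bigl((\Lambda V)^{\otimes r}\otimes W\bigr)$ as differential graded $(\Lambda V)^{\otimes r}$-modules, and projecting onto the first summand defines $q$. (You also need $N\ge c$ for $p_{r,N-c}$ to make sense; this follows from $\MTC_{r+1}(X)\ge\zcl_{r+1}(X;\Q)\ge\cl(X;\Q)$, i.e.\ Proposition \ref{zcl<MTC<TC} together with the superadditivity of $\zcl$ from \cite{FKS}.) With $q$ in hand your diagram chase goes through verbatim and yields $\MTC_r(X)\le N-c$.
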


\begin{proof}
  Let $(\Lambda V,d)$ be a Sullivan model for $X$, and suppose $\cl(X;\Q)=n$. Then there are cocycles $a_1,\ldots,a_n\in\Lambda V$ such that the product $a=a_1\cdots a_n$ represents a non-trivial cohomology class. For each $i$, let
  $$\bar{a}_i=\underbrace{1\otimes\cdots\otimes 1}_r\otimes a_i-\underbrace{1\otimes\cdots\otimes 1}_{r-1}\otimes a_i\otimes 1\in(\Lambda V)^{\otimes(r+1)}$$
  and let $\bar{a}=\bar{a}_1\cdots\bar{a}_n$. Define a map
  $$f\colon(\Lambda V)^{\otimes r}\to(\Lambda V)^{\otimes(r+1)},\quad x\mapsto (x\otimes 1)\bar{a}.$$
  Since $\bar{a}$ is a cocycle, $f$ is a differential graded $(\Lambda V)^{\otimes r}$-module map.

  Let $C$ be a cochain complex of finite type over a field. If $c_1,\ldots,c_k\in C$ are cocycles which are not coboundaries, then there is a decomposition of a cochain complex
  $$C=\langle c_1,\ldots,c_k\rangle\oplus D$$
  for some subcomplex $D$ of $C$. Hence since $a_{i_1}\cdots a_{i_k}$ for $1\le i_1<\cdots<i_k\le n$ are cocycles which are not coboundaries, there is a decomposition of a cochain complex
  $$\Lambda V=\langle a\rangle\oplus W$$
  for some subcomplex $W$ of $\Lambda V$ such that $1\in W$ and $a_{i_1}\cdots a_{i_k}\in W$ for $1\le i_1<\cdots<i_k\le n$ with $k<n$. Thus one gets a decomposition of a differential graded $((\Lambda V)^{\otimes r}\otimes 1)$-module
  $$(\Lambda V)^{\otimes(r+1)}=((\Lambda V)^{\otimes r}\otimes 1)\cdot\langle\bar{a}\rangle\oplus((\Lambda V)^{\otimes r}\otimes W).$$
  Therefore there is a differential graded $(\Lambda V)^{\otimes r}$-module retraction of $f$
  $$q\colon(\Lambda V)^{\otimes(r+1)}\to(\Lambda V)^{\otimes r},\quad q(\bar{a})=1,\quad q((\Lambda V)^{\otimes r}\otimes W)=0.$$

  It is proved in \cite{FKS} that $\zcl_{r+1}(X;\Bbbk)\ge\zcl_r(X;\Bbbk)+\cl(X;\Bbbk)$ for any field $\Bbbk$. Then by Proposition \ref{zcl<MTC<TC}, $\MTC_{r+1}(X)=m+n$ for some non-negative integer $m$, and so there is a differential graded $(\Lambda V)^{\otimes(r+1)}$-module homotopy retraction $q'$ of the projection $p_{r+1,m+n}\colon(\Lambda V)^{\otimes(r+1)}\to(\Lambda V)^{\otimes(r+1)}/K_{r+1}^{m+n+1}$. On the other hand, since $\bar{a}$ belongs to $K_{r+1}^n$, there is a commutative diagram of differential graded $(\Lambda V)^{\otimes r}$-modules
  $$\xymatrix{(\Lambda V)^{\otimes r}\ar[r]^f\ar[d]_{p_{r,m}}&(\Lambda V)^{\otimes(r+1)}\ar[d]^{p_{r+1,m+n}}\\
  (\Lambda V)^{\otimes r}/K_r^{m+1}\ar[r]^{\bar{f}}&(\Lambda V)^{\otimes(r+1)}/K_{r+1}^{m+n+1}.}$$
  Then $q\circ q'\circ\bar{f}$ is a differential graded $(\Lambda V)^{\otimes r}$-module homotopy retraction of the projection $p_{r,m}$. Thus $\MTC_r(X)+\cl(X;\Q)\le m+n=\MTC_{r+1}(X)$.
\end{proof}

\begin{lemma}
  \label{upper bound}
  For $r\ge 2$,
  $$\MTC_{r+1}(X)-\MTC_r(X)\le2\cat(X).$$
\end{lemma}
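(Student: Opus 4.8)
The plan is to realize the diagonal as a composite and translate the classical subadditivity of (module) sectional category into an explicit manipulation of ideals. Concretely, $\Delta_{r+1}=(\mathrm{id}_{X^{r-1}}\times\Delta_2)\circ\Delta_r\colon X\to X^r\to X^{r+1}$, which on Sullivan models becomes $\mu_{r+1}=\mu_r\circ m$, where $m\colon(\Lambda V)^{\otimes(r+1)}\to(\Lambda V)^{\otimes r}$ multiplies the last two tensor factors. Write $A=(\Lambda V)^{\otimes(r+1)}$, let $\widetilde{K}\subseteq A$ be the ideal generated by $v(i)-v(i+1)$ for $1\le i\le r-1$ (so that $\widetilde{K}=K_r\cdot A$ under the inclusion of the first $r$ factors, and $A/\widetilde{K}^{a+1}\cong\bigl((\Lambda V)^{\otimes r}/K_r^{a+1}\bigr)\otimes\Lambda V$), and let $L=(v(r)-v(r+1)\mid v\in V)=\ker m$, the latter by the two-factor case of Lemma \ref{K}. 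Then $K_{r+1}=\widetilde{K}+L$.

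Set $a=\MTC_r(X)$ and $c=\cat(X)$. The first key step is the ideal inclusion
$$K_{r+1}^{a+2c+1}=(\widetilde{K}+L)^{a+2c+1}=\sum_{i+j=a+2c+1}\widetilde{K}^i L^j\subseteq\widetilde{K}^{a+1}+L^{2c+1},$$
which holds by pigeonhole: any $(i,j)$ with $i+j=a+2c+1$ has $i\ge a+1$ or $j\ge 2c+1$. Now I would use that a module homotopy retraction for the quotient by a \emph{larger} ideal induces one for the quotient by any smaller ideal it contains: given $I\subseteq I'$, one lifts the surjection $A/I\to A/I'$ through the relative Sullivan models (rel $A$, as a map of semifree $A$-modules) and post-composes with the retraction for $I'$. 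Applied to $I=K_{r+1}^{a+2c+1}\subseteq I'=\widetilde{K}^{a+1}+L^{2c+1}$, this reduces the whole problem to producing an $A$-module homotopy retraction of the projection $A\to A/(\widetilde{K}^{a+1}+L^{2c+1})$.

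Next I would obtain this retraction by combining two retractions coming from the two summands. For $\widetilde{K}^{a+1}$: since $\MTC_r(X)=a$, the projection $p_{r,a}$ has a $(\Lambda V)^{\otimes r}$-module homotopy retraction, and tensoring it with the identity on the remaining factor (flat base change along $(\Lambda V)^{\otimes r}\hookrightarrow A$) gives an $A$-module retraction of $A\to A/\widetilde{K}^{a+1}$. For $L^{2c+1}$: by Proposition \ref{zcl<MTC<TC} together with the standard estimate $\TC_2(X)\le 2\cat(X)$ we get $\MTC_2(X)\le\TC_2(X)\le 2c$, so the two-factor projection modulo $L^{2c+1}$ admits a module homotopy retraction; base-changing over the inert factors $(\Lambda V)^{\otimes(r-1)}$ yields an $A$-module retraction of $A\to A/L^{2c+1}$.

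The hard part will be the final combination. Because $A/(\widetilde{K}^{a+1}+L^{2c+1})$ is the relative tensor product over $A$ of $A/\widetilde{K}^{a+1}$ and $A/L^{2c+1}$, the natural candidate is the tensor product $\rho_1\otimes_A\rho_2$ of the two retractions, formed on the tensor product $A\otimes\Lambda W_1\otimes\Lambda W_2$ of the two relative Sullivan models; this is automatically an $A$-module chain map restricting to the identity on $A$. What must be verified is that this tensor product of models genuinely resolves the quotient by $\widetilde{K}^{a+1}+L^{2c+1}$, i.e. that $\mathrm{Tor}^A_{>0}\bigl(A/\widetilde{K}^{a+1},A/L^{2c+1}\bigr)=0$, so that $\rho_1\otimes_A\rho_2$ is a retraction of a genuine relative model of the projection. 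The transversality needed should come from the fact that $\widetilde{K}$ and $L$ are generated by disjoint families of shifted generators (the first $r-1$ differences versus the $r$-th difference); making this rigorous, or instead invoking a composition (subadditivity) theorem for module sectional category from \cite{C,C1}, is where the real work lies. Granting it, the combined retraction gives $\MTC_{r+1}(X)\le a+2c=\MTC_r(X)+2\cat(X)$.
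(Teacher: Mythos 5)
Your reduction to a retraction of $A\to A/(\widetilde{K}^{a+1}+L^{2c+1})$ is sound (the pigeonhole inclusion of ideals, the passage from a larger ideal to a smaller one, and the two flat base changes are all fine), but the argument stops exactly where the real content would have to be: the combination of the two retractions. You flag this yourself, and it is a genuine gap, not a routine verification. The proposed candidate $\rho_1\otimes_A\rho_2$ is a retraction of a relative model of the projection only if $(A\otimes\Lambda W_1)\otimes_A(A\otimes\Lambda W_2)\to (A/\widetilde{K}^{a+1})\otimes_A(A/L^{2c+1})$ is a quasi-isomorphism, i.e.\ only if the derived and underived tensor products agree. The heuristic that $\widetilde{K}$ and $L$ have ``disjoint'' generators does not deliver this: the two families of differences both involve the $r$-th tensor factor, the linear change of variables that would separate them is not compatible with the differential $d$ of a general Sullivan algebra, and the ideals in question are \emph{powers} $\widetilde{K}^{a+1}$ and $L^{2c+1}$, whose quotients are not of K\"unneth (disjoint-variables) type. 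Nor can you fall back on a generic composition theorem: for sectional category of a composite the known general bound is multiplicative, $\secat(qp)+1\le(\secat(q)+1)(\secat(p)+1)$, not additive, and \cite{C,C1} do not supply an additive composition formula for module sectional category in the form you need. So as written the proof establishes only the reduction, not the lemma.

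You have also missed that no such machinery is needed. The paper's proof is two lines: $\MTC_{r+1}(X)\le\TC_{r+1}(X)\le\cat(X^{r+1})\le(r+1)\cat(X)$ by Proposition \ref{zcl<MTC<TC}, while the homotopy pullback of $\Delta_r$ along $X^{r-1}\times\{*\}\hookrightarrow X^r$ together with \cite[Proposition 5.5]{FGKV} and the rational product formula $\cat(X^{r-1})=(r-1)\cat(X)$ of \cite[Theorem 30.2]{FHT} gives $\MTC_r(X)\ge(r-1)\cat(X)$; subtracting yields $2\cat(X)$. Note that the paper's argument bounds the \emph{difference} using an upper bound for $\MTC_{r+1}$ and a lower bound for $\MTC_r$ separately; your approach, if the combination step could be completed, would prove the genuinely relative inequality $\MTC_{r+1}(X)\le\MTC_r(X)+\MTC_2(X)$, which is stronger and potentially interesting, but it is not proved here.
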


\begin{proof}
  Since there are obvious inequalities $\TC_r(X)\le \cat(X^r)\le r\cat(X)$, it follows from Proposition \ref{zcl<MTC<TC} that
  $$\MTC_r(X)\le r\cat(X).$$
  There is a homotopy pullback
  $$\xymatrix{\ast\ar[r]\ar[d]&X\ar[d]^{\Delta_r}\\
  X^{r-1}\ar[r]^j&X^r}$$
  where $j(x_1,\ldots,x_{r-1})=(x_1,\ldots,x_{r-1},*)$ for $(x_1,\ldots,x_{r-1})\in X^{r-1}$. Then since $X$ is a rational space, it follows from \cite[Proposition 5.5]{FGKV} and \cite[Theorem 30.2]{FHT} that
  \begin{equation}
    \label{MTC lower bound}
    \MTC_r(X)\ge\cat(X^{r-1})=(r-1)\cat(X).
  \end{equation}
  Thus $\MTC_{r+1}(X)-\MTC_r(X)\le(r+1)\cat(X)-(r-1)\cat(X)=2\cat(X)$, as claimed.
\end{proof}

Now we are ready to prove Theorems \ref{MTC cup} and \ref{odd}.

\begin{proof}
  [Proof of Theorem \ref{MTC cup}]
  Combine Lemmas \ref{lower bound} and \ref{upper bound}.
\end{proof}

\begin{proof}
  [Proof of Corollary \ref{generating MTC}]
  One has $\MTC_r(X)\le\TC_r(X)\le r\cat(X)$. Also $\MTC_r(X)+\cl(X;\Q)\le\MTC_{r+1}(X)$ by Theorem \ref{MTC cup}. Then the statement follows from the proof of \cite[Lemma 1]{FKS} where it is shown that $P(1)=\cat(X)$.
\end{proof}

\begin{proof}
  [Proof of Theorem \ref{odd}]
  Let $(\Lambda V,d)$ be the minimal model of $X$. By Lemma \ref{K},
  $$K_r^{n+1}=(v(i)-v(i+1)\mid v\in V,\,1\le i\le r-1)^{n+1}$$
  where $v(i)$ is as in Lemma \ref{K}. By assumption, $V$ has no even degree element, and so $K_r^{n+1}=0$ for $n=(r-1)\dim V$. On the other hand, it follows from \cite[Example 6, p. 389]{FHT} that $\cat(X)=\dim V$. Then one gets
  $$\TC_r(X)\le(r-1)\cat(X).$$
  On the other hand, by \eqref{MTC lower bound}, $\MTC_r(X)\ge(r-1)\cat(X)$. Thus the proof is complete by Proposition \ref{zcl<MTC<TC}.
\end{proof}


\end{document}